                    \def\version{December 12, 2022}                       %

 \documentclass[reqno,11pt]{amsart}
 \usepackage{amsmath, amsthm, a4, latexsym, amssymb}
\usepackage[unicode]{hyperref}
\usepackage{srcltx}

\usepackage{mathtools}
\usepackage{fancyhdr}
\usepackage{url}
\usepackage{float}
\usepackage{color}
\usepackage{subfigure}
\usepackage[titletoc]{appendix}
\usepackage{hyperref}

\usepackage{nicefrac}
\usepackage{mathrsfs}
\usepackage{bbm}

\setlength{\topmargin}{0in}
\setlength{\headheight}{0.12in}
\setlength{\headsep}{.40in}
\setlength{\parindent}{1pc}
\setlength{\oddsidemargin}{-0.1in}
\setlength{\evensidemargin}{-0.1in}

\marginparwidth 48pt
\marginparsep 10pt

\oddsidemargin-0.5cm
\evensidemargin-.5cm

\headheight 12pt
\headsep 25pt
\footskip 30pt
\textheight  625pt 
\textwidth 170mm
\columnsep 10pt
\columnseprule 0pt
\setlength{\unitlength}{1mm}

\setlength{\parindent}{20pt}
\setlength{\parskip}{2pt}

\def\@rmrk#1#2{\refstepcounter
    {#1}\@ifnextchar[{\@yrmrk{#1}{#2}}{\@xrmrk{#1}{#2}}}

%
 
\makeatletter\@addtoreset{equation}{section}\makeatother

 \sloppy
 \parskip 0.8ex plus0.3ex minus0.2ex
 \parindent1em

 \newfont{\bfit}{cmbxti10 scaled 1200}

\renewcommand{\d}{{\rm d}}

 \newcommand{\e}{{\rm e} }

 \newcommand{\eps}{\varepsilon}

 \newcommand{\R}{\mathbb{R}}
 \newcommand{\N}{\mathbb{N}}
 \newcommand{\Z}{\mathbb{Z}}

 \newcommand{\E}{\mathbb{E}}
 \renewcommand{\P}{\mathbb{P}}
 \def\1{{\mathchoice {1\mskip-4mu\mathrm l} 
{1\mskip-4mu\mathrm l}
{1\mskip-4.5mu\mathrm l} {1\mskip-5mu\mathrm l}}}



\renewcommand{\subsection}{\secdef \subsct\sbsect}
\newcommand{\subsct}[2][default]{\refstepcounter{subsection}
\vspace{0.15cm}
{\flushleft\bf \arabic{section}.\arabic{subsection}~\bf #1  }
\nopagebreak\nopagebreak}
\newcommand{\sbsect}[1]{\vspace{0.1cm}\noindent
{\bf #1}\vspace{0.1cm}}

{\nopagebreak {\hfill\rule{2mm}{2mm}}\\ }

\newtheorem{theorem}{Theorem}[section]
\newtheorem{lemma}[theorem]{Lemma}

\newtheoremstyle{thm}{1.5ex}{1.5ex}{\itshape\rmfamily}{}
{\bfseries\rmfamily}{}{2ex}{}

\newtheoremstyle{rem}{1.3ex}{1.3ex}{\rmfamily}{}
{\itshape\rmfamily}{}{1.5ex}{}
\theoremstyle{rem}

\refstepcounter{subsubsection}

\def\thebibliography#1{\section*{References}
  \list%
  {\arabic{enumi}.}
    {\settowidth\labelwidth{[#1]}\leftmargin\labelwidth
    \advance\leftmargin\labelsep
    \parsep0pt\itemsep0pt
    \usecounter{enumi}}
    \def\newblock{\hskip .11em plus .33em minus .07em}
    \sloppy                   
    \sfcode`\.=1000\relax}



 \begin{document}
\title[Positive and negative moments for directed polymers in random environment in weak disorder]
{\large Positive and negative moments for directed polymers in random environment in weak disorder}
\author[Rodrigo Bazaes and Chiranjib Mukherjee]{}
\maketitle
\thispagestyle{empty}
\vspace{-0.5cm}

\centerline{\sc Rodrigo Bazaes\footnote{Universit\"at M\"unster, Einsteinstrasse 62, M\"unster 48149, Germany {\tt rbazaes@uni-muenster.de}} and 
Chiranjib Mukherjee\footnote{Universit\"at M\"unster, Einsteinstrasse 62, M\"unster 48149, Germany {\tt chiranjib.mukherjee@uni-muenster.de}}}

\renewcommand{\thefootnote}{}
\footnote{\textit{AMS Subject
Classification:}  60K35, 60G57,60K37.}
\footnote{\textit{Keywords: Directed polymers, random environment, weak disorder, uniform integrability, positive moments, negative moments.} 
}

\vspace{-0.5cm}
\centerline{\textit{Universit\"at M\"unster}}
\vspace{0.2cm}

\begin{center}
\version
\end{center}

\begin{quote}{\small {\bf Abstract: }
Very recently, Junk \cite{J22} showed that for directed polymers in bounded random environments, the weak disorder (uniform integrable) phase implies that 
the polymer martingale is bounded in $L^p$ for some $p>1$ and also in $L^q$ for some $q<0$. Here, we establish this characterization of the weak disorder phase without requiring 
the boundedness assumption on the environments. 
}
\end{quote}

\section{Introduction}
We consider the model of {\it directed polymers in random environment} described by the following setup. Fix a collection $\{\omega(t,x):t\in \N,x\in \Z^d\}$ of independent and identically distributed (i.i.d.)
random variables on some probability space $(\Omega,\mathcal{F},\mathbb{P})$ satisfying
\begin{equation}\label{eq-exp-mom-assump}
	 \E[\e^{\beta|\omega(1,0)}|]<\infty \qquad\qquad \forall \beta\geq 0.
\end{equation}
In the above display and in the sequel, $\E=\E^\P$ will stand for the expectation w.r.t. $\P$. 
We are interested in the {\it renormalized partition function} defined by 
 \begin{equation}\label{eq:mart-def}
W_n= W_n(\beta):=E_0\left[\e^{\beta\sum_{i=1}^n \omega(i,S_i)-n\lambda(\beta)}\right],
\end{equation}
where $\lambda(\beta):=\log \E[\e^{\beta \omega(1,0)}]$ and $(S_n)_{n\geq 0}$ is a simple random walk on $\Z^d$ starting at the origin, whose law is denoted by $P_0$ and corresponding expectation denoted by $E_0$. 
As already observed by Bolthausen \cite{B89}, for any $\beta\geq 0$, 
$(W_n)_{n\geq0}$ is a (non-negative) martingale w.r.t. the filtration generated by the i.i.d. random variables $\{\omega(t,x):t\in \N,x\in \Z^d\}$. 
Therefore, for any $\beta\geq 0$, $W_n$ converges almost surely to a non-negative random variable $W_\infty=W_\infty(\beta)$. It is not difficult to see that the 
event $\{W_\infty=0\}$ belongs to the tail $\sigma$-field generated by the i.i.d. random variables $\{\omega(t,x):t\in \N,x\in \Z^d\}$ and therefore 
$\P[W_\infty>0] \in \{0,1\}$. It is well-known that (\cite{IS88,B89,CY06,CSY04,MSZ16})
when $d\geq 3$, there is $\beta_c=\beta_c(d)\in (0,\infty)$ (see \eqref{eq-crit-beta} for a precise definition) so that for 
 for $\beta \in (0,\beta_c)$, $W_\infty$ is non-degenerate and 
\begin{equation}\label{eq-limit-mart}
	W_\infty=\lim_{n\to\infty}W_n(\beta)>0\qquad \mathbb{P}\text{-a.s.,}
\end{equation}
while for $\beta>\beta_c$, $W_\infty=0$. The phases $\beta \in (0,\beta_c)$ (resp. $\beta>\beta_c$) are known as the {\it weak} (resp. {\it strong}) {\it disorder}. 
Furthermore, weak disorder is equivalent to uniform
integrability of the martingale $(W_n)_{n\in \N}$ \cite[Proposition 3.1]{CY06}.

Recently, a new characterization of the weak disorder phase was established in an important work of Junk \cite{J22}, who showed that if {\it the random environment is bounded} -- that is, if we assume that 
\begin{equation}\label{eq:bounded-above-env}
	\text{there exists some } K>0\text{ such that }\mathbb{P}(\omega(1,0)\leq K)=1,
\end{equation}
then uniform integrability of the martingale $(W_n)_n$ implies its $L^p(\P)$-boundedness for some $p>1$ (i.e., $\sup_{n\geq 0} \E[W_n^p]< \infty$). 
It was also shown there that under the assumption  
\begin{equation}\label{eq:bounded-below-env}
	\text{there exists some } K>0\text{ such that }\mathbb{P}(\omega(1,0)\geq -K)=1, 
\end{equation}
the uniform integrability of $(W_n)_n$ implies that it is $L^{q}(\P)$-bounded for some $q<0$. The purpose of our article is to show that these properties 
continue to remain true even if we drop the boundedness assumption. Here is our main result: 
\begin{theorem}\label{thm-main}
	Fix $d\geq 3$ and $\beta>0$ so that the martingale $(W_n)_n$ satisfies \eqref{eq-limit-mart}. Then there exist $p_0>1$ and $q_0<0$ such that 
	\begin{itemize}
		\item for all $1<p<p_0$, the martingale $(W_n)_{n\geq 0}$ is $L^p(\P)$-bounded,
		\item for all $q_0<q<0$, the  martingale $(W_n)_{n\geq 0}$ is $L^q(\P)$-bounded. 
	\end{itemize}
\end{theorem}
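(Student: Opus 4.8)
The plan is to remove the boundedness hypotheses in Junk's argument by a truncation/comparison scheme: one approximates the given environment $\omega$ by bounded environments and transfers the $L^p$ and $L^q$ bounds, using the exponential moment assumption \eqref{eq-exp-mom-assump} to control the errors uniformly in $n$. Concretely, for a large threshold $K$ I would write $\omega(i,x) = \omega_K^{-}(i,x) + \omega_K^{+}(i,x)$ where $\omega_K^{-} := \omega\wedge K$ (or a two-sided truncation $(-K)\vee\omega\wedge K$) is bounded and $\omega_K^{+} := (\omega-K)^{+}$ is the small unbounded remainder, and then compare $W_n(\beta)$ to the partition function $W_n^{K}(\beta)$ built from the truncated environment. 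The point is that for $K$ large the truncated environment is a small perturbation of $\omega$, so if $(W_n(\beta))_n$ is uniformly integrable (equivalently in weak disorder), then so is $(W_n^{K}(\beta))_n$ for $K$ large enough; Junk's theorem then applies to the bounded environment $\omega_K^{-}$ and yields $p=p(K)>1$ and $q=q(K)<0$ with $\sup_n \E[(W_n^{K})^{p}]<\infty$ and $\sup_n \E[(W_n^{K})^{q}]<\infty$.

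The second half is to propagate the moment bounds from $W_n^{K}$ back to $W_n$. For the positive moment, note $W_n = E_0\big[\e^{\beta\sum \omega_K^{-}(i,S_i) - n\lambda_K^{-}} \cdot \e^{\beta\sum \omega_K^{+}(i,S_i) + n(\lambda_K^{-}-\lambda(\beta))}\big]$; since $\omega_K^{+}\geq 0$ this is \emph{not} immediately dominated by $W_n^{K}$, so one uses H\"older's inequality in the $E_0$ expectation with exponents $p'$ and its conjugate to split off the $W_n^{K}$-type factor raised to a slightly smaller power and a factor $E_0\big[\e^{\beta p^{*}\sum_{i\le n}\omega_K^{+}(i,S_i)}\big]$; the key estimate is that, after taking $\E$, the latter contributes a bounded multiplicative constant depending on $K$ and the exponential moments \eqref{eq-exp-mom-assump} (here one exploits that along the walk the sites visited are distinct enough, or uses an $\ell^1$--$\ell^\infty$ bound on $\sum_i\omega_K^{+}(i,S_i)$ together with independence across time). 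A cleaner route, which I would try first, is to avoid splitting the walk's exponential and instead directly run Junk's scheme but replace every appearance of the deterministic bound $\e^{\beta K}$ coming from \eqref{eq:bounded-above-env} by a random quantity with finite exponential moments; Junk's argument is built around a change of measure / size-biasing and a second-moment or fractional-moment computation, and each place where boundedness is used is to bound a single-site weight $\e^{\beta\omega(i,x)}$ — this can instead be bounded in $L^r$ for every $r$ by \eqref{eq-exp-mom-assump}, at the cost of shrinking $p_0$ slightly. The negative moment statement is handled symmetrically using the lower truncation $\omega\vee(-K)$ and the assumption \eqref{eq-exp-mom-assump} applied to $\e^{-\beta\omega}$, which is finite for all $\beta\ge 0$ by the same hypothesis.

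The main obstacle I anticipate is making the perturbation step quantitative: one must show that uniform integrability (a qualitative statement) of $(W_n(\beta))_n$ passes to $(W_n^{K}(\beta))_n$ \emph{with a uniform-in-$n$ control strong enough to feed Junk's quantitative conclusion}, and conversely that the resulting $p(K),q(K)$ do not collapse to $1$ and $0$ as one un-truncates. The natural tool is the characterization ``weak disorder $\iff$ $W_\infty>0$ a.s.'' together with a monotonicity or continuity-in-$\beta$ argument: since $\beta<\beta_c(d)$, there is room to choose $K$ so large that the truncated model is still strictly inside its own weak-disorder region, because the free energy and the critical point depend continuously on the law of the environment (via \eqref{eq-exp-mom-assump}, truncation converges in the relevant sense). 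Once $W_n^{K}$ is genuinely in weak disorder, Junk's theorem gives an honest $p_0^{K}>1$, and the H\"older step above transfers it to $\omega$ with $p_0 := $ some explicit function of $p_0^{K}$ and the exponential moments — bounded away from $1$. Thus the structure is: (i) truncate; (ii) verify weak disorder persists for the truncated model; (iii) invoke \cite{J22}; (iv) H\"older/comparison to de-truncate; and symmetrically for $q_0<0$. The delicate point throughout is step (ii), and I would spend most of the effort there, likely phrasing it via the second-moment method in $d\ge 3$ or via a coupling that dominates $W_n$ by $W_n^{K}$ times a convergent product.
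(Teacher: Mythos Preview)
Your truncation scheme has a genuine gap at step (iv), the de-truncation. After H\"older in the $E_0$-expectation you are left with a factor of the form
\[
\Big(E_0\big[\e^{\beta p^{*}\sum_{i\le n}\omega_K^{+}(i,S_i)}\big]\Big)^{1/p^{*}},
\]
and after taking $\E$ this is \emph{not} a bounded constant: since the sites $(i,S_i)$ are distinct in time, independence gives $\E\big[\e^{\beta p^{*}\sum_{i\le n}\omega_K^{+}(i,S_i)}\big]=\big(\E[\e^{\beta p^{*}\omega_K^{+}}]\big)^n$, and for any finite $K$ the base is strictly larger than $1$ whenever $\omega$ is genuinely unbounded above. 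Thus the ``error'' grows like $\e^{c(K)n}$ with $c(K)>0$, which swamps any polynomial moment bound on $W_n^{K}$. No amount of choosing $K$ large rescues this, because $c(K)\to 0$ but is never zero, while $n\to\infty$. The same obstruction applies symmetrically to the negative-moment side. Step (ii) is also delicate---continuity of $\beta_c$ in the law of the environment is not a standard fact and would itself require work---but even granting it, the scheme founders at (iv).

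The ``cleaner route'' you mention in passing---running Junk's argument directly and replacing each use of the deterministic bound $\e^{\beta K}$ on a single-step ratio by an $L^r$ bound coming from \eqref{eq-exp-mom-assump}---is in fact exactly what the paper does, and is the right idea. Concretely: introduce the stopping time $\tau_t=\inf\{n:W_n>t\}$ and decompose $\E[W_n^p]$ over $\{\tau_t=i\}$; on that event $W_{i-1}\le t$, so the only uncontrolled piece is the jump $(W_i/W_{i-1})^p$. Instead of bounding this jump by a constant (which would need boundedness of $\omega$), apply H\"older with a large exponent $q$ to split off $\E[(W_i/W_{i-1})^{pq}]^{1/q}$, which by Lemma~\ref{lemma:convex-up-bound} is at most $\E[W_1^{pq}]^{1/q}<\infty$ thanks to \eqref{eq-exp-mom-assump}. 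The price is a factor $\sum_i\P(\tau_t=i)^{1-1/q}$; the crucial observation is that $\sum_i\P(\tau_t=i)=\P(M_\infty>t)$ is integrable in $t$ by Lemma~\ref{lemma:running-max}, and a monotone-convergence argument lets one choose first $q$ large and then $t$ large so that this sum is small enough to close the inequality $\E[W_n^p]\le t^p+\tfrac12\E[W_n^p]$. The negative moments are handled the same way with $\tau_t=\inf\{n:W_n\le 1/t\}$. You should abandon the truncation and develop this route.
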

To put our work into context, let us mention that the uniform integrability criterion for weak disorder does not provide any 
{\it closed-form} characterization of the critical temperature $\beta_c$ and, in practice, the uniform integrability is also not easy to analyze. Several important features of the weak disorder phase have been established 
(e.g. \cite{CY06,BC20}) but our understanding of the directed polymers is much more complete and a lot more works are available for a different, {\it very high temperature phase}, which is characterized by the $L^2$-boundedness of the martingale $(W_n)_n$, which is strictly stronger than uniform integrability, see \eqref{eq:L2-beta} below. While the very high temperature regime is computationally quite convenient, the result of \cite{J22} shows that, for bounded environments, no {\it true} phase transition occurs within the weak disorder phase at the $L^2$ threshold. Theorem \ref{thm-main} above reconfirms this intuition and underlines that the characterization of weak disorder via moments is an {\it intrinsic} property of directed polymers in a random environment, independent of any boundedness of the underlying environment -- in particular, Theorem \ref{thm-main} establishes this characterization of weak disorder for natural choices of unbounded environments like Gaussian, Poisson etc.

Let us briefly comment on the main idea of the proof. An important ingredient for the argument in \cite{J22} is a result concerning the running maximum of uniformly integrable martingales (see Theorem 2.1 there). Such arguments 
have been quite useful in the context of branching processes \cite{AN} and branching random walks \cite{Biggins} where the property 
\begin{equation}\label{K}
\frac{M_{n+1}}{M_n} \leq K \qquad \mbox{a.s. for some} \,\, K>0,
\end{equation} 
(here $(M_n)_n$ is the associated martingale in the respective setup) 
implies that the martingale converges in $L^p$ for some explicit $p>1$. If one assumes that for any convex function $f$, $\E[ f( \frac{M_{n+m}}{M_n}) | \mathcal F_n] \leq \E[f(M_m)]$, then 
the branching structure provides enough independence between particles in generation $n$, implying the $L^p$ boundedness since the influence from the early generations is small. 
This idea was used in \cite{J22} for directed polymers in a random environment, where in contrast to the branching structure, the partition function carries long-range correlations. Combined with the boundedness assumption 
(note that \eqref{eq:bounded-above-env} implies \eqref{K}) the $L^p$ boundedness was then established there. 
While the latter assumption is not available in the present setup, we will combine the previous approach with some of the ideas from our recent work \cite{BLM22}, where a similar statement as Theorem \ref{thm-main} 
was obtained for Gaussian multiplicative chaos in the Wiener space (also constructed there). 
However, in the continuous setup, contributions coming from the ``jumps" and the stopping times that we use get flushed out, which contrasts the present discrete setup. 
Finally, we remark that, under the aforementioned boundedness assumption, it has been shown also recently in \cite[Corollary 1.3]{J22b} that for weak disorder, $(W_n)_n$ remains bounded in $L^p(\P)$ for 
$p\in (1, 1+ \frac 2 d)$. This result heavily depends on the fluctuation results \cite[Theorem 1.1]{J22} obtained there for which the boundedness assumption seems to play perhaps an even more important role. It is conceivable 
that combined with some of the present ideas, a similar statement could also be shown without requiring boundedness.

The rest of the article is devoted to the proof of Theorem \ref{thm-main}. 

\section{Proof of Theorem \ref{thm-main}}
For the proof of the theorem, we will use the following results from \cite{J22}:
\begin{lemma}\label{lemma:convex-up-bound}
Let $f:\R_+\mapsto \R$ be a convex function, and $m,n\in \N$. If $\mathcal F_n$ denotes the $\sigma$-algebra generated by the i.i.d. random variables 
	$\big(\omega(i,x): i \leq n, x \in \Z^d\big)$, then 
	\begin{equation}\label{eq:a.s.bound-convex}
		\E\left[f\left(\frac{W_{n+m}}{W_n}\right)\bigg \vert\mathcal{F}_n \right]\leq \E[f(W_m)]\qquad a.s.
	\end{equation}
	In particular,\begin{equation}
		\E\left[f\left(\frac{W_{n+m}}{W_n}\right)\right]\leq \E[f(W_m)]\leq \E[f(W_{n+m})].\label{eq:moment-bound-convex}
	\end{equation}
	The inequality is reversed for a concave function. 
\end{lemma}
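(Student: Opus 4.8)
The plan is to use the Markov property of the simple random walk to split the length-$(n+m)$ polymer at time $n$. Conditioning a path $(S_i)_{0\le i\le n+m}$ on its first $n$ steps, I would write
\[
W_{n+m}=E_0\Big[\e^{\beta\sum_{i=1}^n\omega(i,S_i)-n\lambda(\beta)}\;\widehat W^{(n)}_m(S_n)\Big],
\qquad
\widehat W^{(n)}_m(x):=E_x\Big[\e^{\beta\sum_{j=1}^m\omega(n+j,S_j)-m\lambda(\beta)}\Big],
\]
where $\widehat W^{(n)}_m(x)$ is the length-$m$ partition function started from $x$ in the environment shifted by $n$ in time. Dividing by $W_n$ then gives the key identity
\[
\frac{W_{n+m}}{W_n}=\sum_{x\in\Z^d}\xi_n(x)\,\widehat W^{(n)}_m(x),
\qquad
\xi_n(x):=\frac{1}{W_n}E_0\Big[\e^{\beta\sum_{i=1}^n\omega(i,S_i)-n\lambda(\beta)}\1_{S_n=x}\Big],
\]
where $\xi_n=(\xi_n(x))_{x\in\Z^d}$ is the (random) endpoint distribution of the length-$n$ polymer, a probability vector on $\Z^d$. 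The two structural facts I would record are: $\xi_n$ is $\mathcal{F}_n$-measurable, whereas the family $\{\widehat W^{(n)}_m(x)\}_{x\in\Z^d}$ is a function of $\{\omega(i,\cdot):n<i\le n+m\}$ only, hence independent of $\mathcal{F}_n$; and, by translation invariance of the i.i.d.\ environment in time and space, each $\widehat W^{(n)}_m(x)$ has the same law as $W_m$.

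Given this, \eqref{eq:a.s.bound-convex} follows from Jensen's inequality: since $f$ is convex and $\xi_n$ is a probability measure on $\Z^d$,
\[
f\!\left(\frac{W_{n+m}}{W_n}\right)=f\!\left(\sum_x\xi_n(x)\widehat W^{(n)}_m(x)\right)\le\sum_x\xi_n(x)\,f\big(\widehat W^{(n)}_m(x)\big)\qquad a.s.,
\]
and then taking $\E[\,\cdot\mid\mathcal{F}_n]$, pulling out the $\mathcal{F}_n$-measurable weights $\xi_n(x)$, and using that $\widehat W^{(n)}_m(x)$ is independent of $\mathcal{F}_n$ with $\widehat W^{(n)}_m(x)\stackrel{d}{=}W_m$, yields $\E[f(W_{n+m}/W_n)\mid\mathcal{F}_n]\le\sum_x\xi_n(x)\,\E[f(W_m)]=\E[f(W_m)]$. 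Taking expectations gives the first inequality of \eqref{eq:moment-bound-convex}. For the second inequality I would use that $(W_k)_k$ is a martingale, so $W_m=\E[W_{n+m}\mid\mathcal{F}_m]$; conditional Jensen then gives $f(W_m)\le\E[f(W_{n+m})\mid\mathcal{F}_m]$, and taking expectations gives $\E[f(W_m)]\le\E[f(W_{n+m})]$. For concave $f$ every application of Jensen reverses, which yields the reversed chain.

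The content here is entirely structural, so the points needing care are bookkeeping: I would check that all expectations are well defined (a convex $f:\R_+\to\R$ dominates an affine function and $\E[W_m]=1<\infty$, so $\E[f(W_m)^-]<\infty$ and every expectation takes values in $(-\infty,\infty]$, with the concave case symmetric), that the interchange of $\sum_x$ with $\E[\,\cdot\mid\mathcal{F}_n]$ is legitimate (split off the affine lower bound, using $\E[W_{n+m}/W_n\mid\mathcal{F}_n]=1$ for its contribution and monotone convergence for the nonnegative remainder), and that the conditional form of Jensen's inequality applies to the random measure $\xi_n$ — routine since $\xi_n$ is supported on the countable set $\Z^d$. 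I do not expect any real obstacle; no quantitative bound on $W_n$, on $\lambda(\beta)$, or on the law of $\omega$ is used.
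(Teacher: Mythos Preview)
Your proof is correct and follows essentially the same route as the paper: write $W_{n+m}/W_n$ as the polymer endpoint measure at time $n$ averaging shifted copies of $W_m$, apply Jensen, then use $\mathcal{F}_n$-measurability of the weights and independence plus stationarity of the shifted partition functions. You are in fact more thorough than the paper's version, which omits the martingale/Jensen argument for the second inequality in \eqref{eq:moment-bound-convex} and the integrability bookkeeping you mention.
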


\begin{lemma}\label{lemma:running-max}	
Given $n\in \N$, let $M_n:=\max_{0\leq j\leq n}W_n$ and $M_\infty:=\sup_{n\geq 0}W_n$. If the martingale $W_n$ is uniformly integrable, then $\E[M_\infty]<\infty$.
\end{lemma}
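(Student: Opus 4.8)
The plan is to prove the \emph{uniform} bound $\sup_{N\ge 0}\E[M_N]<\infty$; since $M_N\uparrow M_\infty$, the assertion $\E[M_\infty]<\infty$ then follows by monotone convergence. Each $M_N$ is integrable for trivial reasons ($M_N\le W_0+\cdots+W_N$, so $\E[M_N]\le N+1$), and $M_\infty<\infty$ almost surely because $(W_n)$ converges a.s.; thus the whole content is the uniformity in $N$.

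The heart of the matter is a self-improving estimate: for every $\lambda>1$, with $\tau=\tau_\lambda:=\inf\{n\ge 0:W_n>\lambda\}$,
\begin{equation}\label{eq:rm-selfbound}
\E\big[M_N\1_{\{M_N>\lambda\}}\big]\;\le\;\E[M_N]\cdot\Big(1-\E\big[W_N\1_{\{M_N\le\lambda\}}\big]\Big).
\end{equation}
To establish \eqref{eq:rm-selfbound} I would argue as follows. First, $\{M_N>\lambda\}=\{\tau\le N\}$, and on $\{\tau=k\}$ with $1\le k\le N$ one has $M_N=W_k\,\max_{0\le m\le N-k}\big(W_{k+m}/W_k\big)\le W_k\,\max_{0\le m\le N}\big(W_{k+m}/W_k\big)$ (since $W_j\le\lambda<W_k$ for $j<k$ there). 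Next, use the ``restart'' identity for the partition function: for all $k,m$,
\[
\frac{W_{k+m}}{W_k}=\sum_{x\in\Z^d}\hat\rho_k(x)\,\widetilde{W}_m(k,x),\qquad \hat\rho_k(x):=\frac{1}{W_k}\,E_0\Big[\e^{\beta\sum_{i=1}^k\omega(i,S_i)-k\lambda(\beta)}\1_{\{S_k=x\}}\Big],
\]
where $\hat\rho_k(\cdot)$ is the ($\mathcal{F}_k$-measurable) random polymer endpoint distribution and, for each $x$, $\big(\widetilde{W}_m(k,x)\big)_{m\ge 0}$ is the partition function martingale started from $x$ and driven by the environment $\{\omega(k+i,\cdot):i\ge 1\}$ — hence independent of $\mathcal{F}_k$ and equal in law to $(W_m)_{m\ge 0}$. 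Since $\hat\rho_k$ is a probability measure on $\Z^d$, $\max_{0\le m\le N}\big(W_{k+m}/W_k\big)\le\sum_x\hat\rho_k(x)\max_{0\le m\le N}\widetilde{W}_m(k,x)$, so taking $\E[\,\cdot\,\vert\,\mathcal{F}_k]$ gives $\E\big[\max_{0\le m\le N}(W_{k+m}/W_k)\,\vert\,\mathcal{F}_k\big]\le\sum_x\hat\rho_k(x)\E[M_N]=\E[M_N]$. Consequently $\E[M_N\1_{\{\tau=k\}}]\le\E[M_N]\,\E[W_k\1_{\{\tau=k\}}]$; summing over $1\le k\le N$ and using optional stopping at the bounded time $\tau\wedge N$ (so $\E[W_{\tau\wedge N}]=\E[W_0]=1$, i.e.\ $\E[W_\tau\1_{\{\tau\le N\}}]=1-\E[W_N\1_{\{\tau>N\}}]=1-\E[W_N\1_{\{M_N\le\lambda\}}]$) yields \eqref{eq:rm-selfbound}.

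Granting \eqref{eq:rm-selfbound}, one concludes quickly. For any $\Lambda>1$, $\E[M_N]\le\Lambda+\E[M_N\1_{\{M_N>\Lambda\}}]\le\Lambda+\E[M_N]\big(1-\E[W_N\1_{\{M_N\le\Lambda\}}]\big)$, hence $\E[M_N]\cdot\E[W_N\1_{\{M_N\le\Lambda\}}]\le\Lambda$. This is where uniform integrability enters: $W_N\to W_\infty$ in $L^1(\P)$ with $\E[W_\infty]=1$, and $M_N\uparrow M_\infty<\infty$ a.s., so $\1_{\{M_N\le\Lambda\}}\downarrow\1_{\{M_\infty\le\Lambda\}}$ and a short $L^1$-argument gives $\E[W_N\1_{\{M_N\le\Lambda\}}]\to\E[W_\infty\1_{\{M_\infty\le\Lambda\}}]$ as $N\to\infty$, while the latter increases to $\E[W_\infty]=1$ as $\Lambda\to\infty$. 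Fixing $\Lambda$ with $\E[W_\infty\1_{\{M_\infty\le\Lambda\}}]\ge\tfrac34$, we get $\E[W_N\1_{\{M_N\le\Lambda\}}]\ge\tfrac12$ for all large $N$, hence $\E[M_N]\le 2\Lambda$ for large $N$, and therefore for every $N$ since $N\mapsto\E[M_N]$ is non-decreasing. Letting $N\to\infty$ gives $\E[M_\infty]\le 2\Lambda<\infty$.

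The soft ingredients — optional stopping, monotone convergence, and the $L^1$-consequences of uniform integrability — are routine. The one place where genuine structure enters, and which I expect to be the crux to set up cleanly, is the restart identity together with the resulting bound $\E\big[\max_{0\le m\le N}(W_{k+m}/W_k)\,\vert\,\mathcal{F}_k\big]\le\E[M_N]$: this is the directed-polymer substitute for the branching property exploited in \cite{AN,Biggins} (and, in its conditional convex-order incarnation, it is exactly the mechanism behind Lemma \ref{lemma:convex-up-bound}), the decisive point being that conditionally on $\mathcal{F}_k$ the rescaled future $(W_{k+m}/W_k)_{m\ge 0}$ is a genuine $\hat\rho_k$-mixture of independent copies of $(W_m)_{m\ge 0}$. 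Notably, no boundedness of the environment is needed anywhere — only that $\{\omega(i,x)\}$ is i.i.d.\ and that $\lambda(\beta)<\infty$.
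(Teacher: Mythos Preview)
Your argument is correct, but it follows a genuinely different route from the paper's. The paper proves a \emph{tail comparison}: using the concave version of Lemma~\ref{lemma:convex-up-bound} with the function $f_\eps(x)=(\tfrac{x}{\eps}-1)\wedge 1$, it shows that $\P[M_n>t]\le \eta^{-1}\P[W_n>\eps t]$ for fixed $\eps,\eta>0$ (the positivity of $\eta$ coming from $\inf_k W_k>0$ a.s.\ in weak disorder), and then integrates in $t$ to get $\E[M_n]\le (\eps\eta)^{-1}+1$. You instead derive a direct $L^1$ self-bound: on $\{\tau_\lambda=k\}$ you factor $M_N=W_k\cdot\max_m(W_{k+m}/W_k)$, use the restart identity to bound the conditional expectation of the second factor by $\E[M_N]$, sum, and apply optional stopping to arrive at $\E[M_N]\cdot\E[W_N\1_{\{M_N\le\Lambda\}}]\le\Lambda$; uniform integrability then enters only through $\E[W_N\1_{\{M_N\le\Lambda\}}]\to\E[W_\infty\1_{\{M_\infty\le\Lambda\}}]$ and $\E[W_\infty]=1$. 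The paper's route yields the stronger pointwise tail domination of $M_\infty$ by $W_\infty$ (potentially useful for higher moments), at the price of invoking $\inf_k W_k>0$ a.s.; your route is more self-contained, bypasses the concave-function device, and uses only the $L^1$-convergence consequence of uniform integrability. Both rest on the same structural fact --- the polymer restart identity underlying Lemma~\ref{lemma:convex-up-bound} --- but package it differently.
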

The proofs can be found in Appendix \ref{sec-appendix}.

\subsection{Proof of Theorem \ref{thm-main}: positive moments.}
Let \begin{equation}\label{eq-crit-beta}
	\beta_c:=\sup\Big\{\beta>0: (W_n)_{n\geq 0}\text{ is uniformly integrable}\Big\}
\end{equation}
and \begin{equation}\label{eq:L2-beta}
	\beta_{L^2}:=\sup\Big\{\beta>0: \sup_n \E[W_n^2]< \infty\Big\}.
\end{equation}
It is well-known that $0<\beta_{L^2}<\beta_c$ when $d\geq 3$ \cite{Birkner,C17}. Moreover, if $\beta<\beta_{L^2}$, then we can take $p_0=2$.$^1$\footnote{$^1$Note that Theorem \ref{thm-main} also implies that for the  directed polymer model, for $\beta \in (0,\beta_{L^2})$, there is $\eps>0$ such that $\sup_n \E[W_n^{2+\eps}]<\infty.$}  Thus, we can assume that $\beta_{L_2}\leq \beta<\beta_c$, and therefore we need to find $p_0\in (1,2)$ such that $W_{\infty}\in L^p$ for all $1<p<p_0$. Given $t>1$, define the stopping time \begin{equation}
	\begin{aligned}
		\tau_t&:=\inf\Big\{n\geq 0: W_n>t\Big\}.
	\end{aligned}
\end{equation}
For a given $n\in \N$ and $t>1$, $p\in (1,2)$ to determine, we have \begin{equation}\label{eq:eq1}
	\begin{aligned}
		\E[W_n^p]&=\E[W_n^p,\tau_t>n]+\E[W_n^p,\tau_t \leq n]\\&\leq t^p+\E[W_n^p,\tau_t \leq n]\\
		&= t^p+\sum_{i=1}^n\E[W_n^p,\tau_t=i]\\
		&=t^p+\sum_{i=1}^n\E\left[\left(\frac{W_n}{W_i}\right)^p\left(\frac{W_i}{W_{i-1}}\right)^p W_{i-1}^p,\tau_t=i\right]\\
		&\leq t^p+t^p\sum_{i=1}^n\E\left[\left(\frac{W_n}{W_i}\right)^{p}\left(\frac{W_i}{W_{i-1}}\right)^p\tau_t=i\right].
	\end{aligned}
\end{equation} 
In the last line, we used that on the event $\{\tau_t=i\}$, $W_{i-1}^p\leq t^p$. Conditioning on $\mathcal{F}_i$ and using Lemma \ref{lemma:convex-up-bound}, we obtain \begin{equation}\label{eq:eq2}
	\begin{aligned}
		\E[W_n^p]&\leq  t^p+t^p\sum_{i=1}^n\E\left[\left(\frac{W_i}{W_{i-1}}\right)^p\mathbbm{1}\{\tau_t=i\}\E\left[\left(\frac{W_n}{W_i}\right)^{p}\bigg\vert \mathcal{F}_i\right]\right]\\
		&\leq t^p+t^p\E[W_n^p]\sum_{i=1}^n\E\left[\left(\frac{W_i}{W_{i-1}}\right)^p\mathbbm{1}\{\tau_t=i\}\right]
	\end{aligned}
\end{equation}

Moreover, if $q,r>1$ (to be chosen later) satisfy $\frac{1}{q}+\frac{1}{r}=1$, by H\"{o}lder's inequality, \begin{equation}\label{eq:eq6}
	\begin{aligned}
		\sum_{i=1}^n\E\left[\tau_t=i, \left(\frac{W_{i}}{W_{i-1}}\right)^{p}\right]&\leq \sum_{i=1}^n\E\left[ \left(\frac{W_{i}}{W_{i-1}}\right)^{pq}\right]^{1/q}\P(\tau_t=i)^{1-\frac{1}{q}}.\\
	\end{aligned}
\end{equation}
By Lemma \ref{lemma:convex-up-bound}, $\E\left[ \left(\frac{W_{i}}{W_{i-1}}\right)^{pq}\right]^{1/q}\leq \E\left[W_1^{pq}\right]^{1/q}$. Since $p<2$,\begin{equation*}
	\E\left[W_1^{pq}\right]^{1/q}\leq \E[W_1^{2q}]^{\frac{p}{2q}}\leq \E[W_1^{2q}]
\end{equation*}
(recall that $\E[W_1^{2q}]\in [1,\infty)$ by \eqref{eq-exp-mom-assump}). The last display and \eqref{eq:eq6} yield \begin{equation}\label{eq:varphi-def}
	\sum_{i=1}^n\E\left[\tau_t=i, \left(\frac{W_{i}}{W_{i-1}}\right)^{p}\right]\leq \E[W_1^{2q}]\sum_{i=1}^n\P(\tau_t=i)^{1-\frac{1}{q}},
\end{equation}
and therefore (recall \eqref{eq:eq2}),\begin{equation}\label{eq:eq7}
	\E[W_n^p]\leq  t^p+t^p\E\left[W_{n}^p\right]\E[W_1^{2q}]\sum_{i=1}^n\P(\tau_t=i)^{1-\frac{1}{q}}.
	\end{equation}

Let \begin{equation*}\label{eq:varphi-def}
	 \varphi(t,r):=\sum_{i=1}^\infty\P(\tau_t=i)^r\in[0,\infty], \qquad t>1, r\in (0,1].
\end{equation*}

Note that for all $t>1$, $\varphi(t,1)=\sum_{i=1}^\infty\P(\tau=i)= \P(\tau_t<\infty)=\P(M_\infty>t)$ (recall Lemma \ref{lemma:running-max}), and \begin{equation*}
	\E[M_\infty]=\int_{0}^{\infty}\P(M_\infty>t)\d t=1+\int_{1}^\infty\P(M_\infty>t)<\infty.
\end{equation*}
We conclude that \begin{equation*}
	\int_{1}^\infty \varphi(t,1)\d t= c:=\int_1^\infty \P(M_\infty>t)\d t<\infty.
\end{equation*}
By monotone convergence, $\lim_{r\nearrow 1}\int_{1}^\infty \varphi(t,r)\d t=c$. In particular, for $r>0$ close to $1,\int_{1}^\infty \varphi(t,r)\d t<\infty$. As a consequence, if $q>1$ is sufficiently large, then for all $\eps>0$ there exists some $t_0=t_0(q,\eps)>1$ satisfying \begin{equation*}
	\sup_n\sum_{i=1}^n\P(\tau_{t_0}=i)^{1-\frac{1}{q}}\leq \varphi\left(t_0,1-\frac{1}{q}\right)<\frac{\eps}{t_0}.\end{equation*}
For $q$ as above, set 
$$
\eps=\frac{1}{2 \E[W_1^{2q}]},
$$
 so that for some $t_0>1$,
 \begin{equation*}
 \begin{aligned}
 t_0^p \, \E\left[(W_{n})^p\right] \, \E[W_1^{2q}]\,\bigg(\sum_{i=1}^n\P(\tau_{t_0}=i)^{1-\frac{1}{q}}\bigg)
 <\frac{t_0^{p-1}}{2}.
 \end{aligned}
\end{equation*}
Finally, choose $p\in (1,2)$ such that $t_0^{p-1}<2$. The last display and \eqref{eq:eq7} imply 
$$
\E[W_n^p]\leq t_0^p+\frac{t_0^{p-1}}{2}\E[W_n^p],
$$
 and hence \begin{equation*}
	\sup_n\E[W_n^p]\leq \frac{2t_0^p}{2-t_0^{p-1}}<\infty.
\end{equation*}
\qed

\subsection{Proof of Theorem \ref{thm-main}: negative moments}
We will find $q\in (0,1)$ such that $\sup_n\E[W_n^{-q}]<\infty$. The proof is similar to the previous case. For $t>1$ to determine, set 
\begin{align*}
\tau_t &:=\inf\{n\geq 0: W_n\leq 1/t\}.
\end{align*}

 We obtain a decomposition as in \eqref{eq:eq1}:\begin{equation}\label{eq:eq8}
	\begin{aligned}
		\E[W_n^{-q}]&=\E[W_n^{-q},\tau_t>n]+\E[W_n^{-q},\tau_t \leq n]\\&\leq t^q+\E[W_n^{-q},\tau_t \leq n]\\
		&=t^q+\sum_{i=1}^n\E[W_n^{-q},\tau_t=i]\\
		&=t^q+\sum_{i=1}^n\E\left[\left(\frac{W_n}{W_i}\right)^{-q}\left(\frac{W_i}{W_{i-1}}\right)^{-q}W_{i-1}^{-q},\tau_t=i\right]\\
		&\leq t^q+t^q\sum_{i=1}^n\E\left[\left(\frac{W_n}{W_i}\right)^{-q}\left(\frac{W_i}{W_{i-1}}\right)^{-q},\tau_t=i\right],
		\end{aligned}
\end{equation} 
where in the last line we used that $W_{i-1}\geq \frac{1}{t}$ on the event $\{\tau_t=i\}$. Conditioning on $\mathcal{F}_i$ and using Lemma \ref{lemma:convex-up-bound}, we deduce that 
\begin{equation}\label{eq:eq9}
\E[W_n^{-q}]\leq t^q+t^q\E[W_n^{-q}]\sum_{i=1}^n\E\left[\left(\frac{W_i}{W_{i-1}}\right)^{-q},\tau_t=i\right].	
\end{equation}

Next, we apply H\"{o}lder's inequality with exponents $\frac{1}{q}$ and $\frac{1}{1-q}$, and \eqref{eq:moment-bound-convex} with the convex function $x\mapsto x^{-1}$ to obtain \begin{equation}\label{eq:eq10}
	\begin{aligned}
		\E[W_n^{-q}]&\leq t^q+t^q\E[W_n^{-q}]\sum_{i=1}^n\E\left[\left(\frac{W_i}{W_{i-1}}\right)^{-1}\right]^q\P(\tau_t=i)^{1-q}\\
	&\leq t^q+t^q\E[W_n^{-q}]\E[W_1^{-1}]\sum_{i=1}^\infty\P(\tau_t=i)^{1-q}.
	\end{aligned}
\end{equation}
Note that 
$$
\varphi(t):=\sum_{i=1}^\infty\P(\tau_t=i)=\P(\tau_t<\infty)\to 0\qquad \mbox{as }\,\,t\to\infty.
$$
 Therefore, for $t_0>1$ large enough, 
 $$
 \varphi(t_0)<\frac{1}{4\E[W_1^{-1}]}.
 $$
  Since $\varphi(t_0)=\lim_{r\nearrow 1}\sum_{i=1}^\infty\P(\tau_{t_0}=i)^{r}$, we can find some $q_0\in (0,1)$ such that \begin{equation*}
	t_0^{q_0}<2\qquad \text{ and } \qquad \sum_{i=1}^\infty\P(\tau_{t_0}=i)^{1-q_0}< \frac{1}{2\E[W_1^{-1}]}.
\end{equation*}
The last display, combined with \eqref{eq:eq10} implies that for all $n$ and $q_0\in (0,1)$ as above,\begin{equation*}
	\E[W_n^{-q_0}]\leq \frac{4}{2-t_0^{q_0}}<\infty.
\end{equation*}

This concludes the proof of Theorem \ref{thm-main}.\qed

\appendix 

\section{Proof of Lemmas \ref{lemma:convex-up-bound} and \ref{lemma:running-max}.}\label{sec-appendix}
While the proofs can be found in \cite{J22}, we include them here for the sake of completeness. 

\begin{proof}[Proof of Lemma \ref{lemma:convex-up-bound}]
Let 
$$
\mu_{n,\beta}(A)= \mu_{\beta,n}^\omega(A)= \frac 1 {W_n} E_0\big[\exp\big(\sum_{i=1}^n \omega(i,S_i) - n \lambda(\beta)\big)\1_A\big]
$$
be the polymer (probability) measure and let
$(\theta_{n,y}\omega)(\cdot,\cdot)=\omega(n+\cdot, y + \cdot)$ denote the space-time shift. Then 
\begin{equation}\label{eq1}
\begin{aligned}
\E\bigg[f\bigg(\frac {W_{n+m}}{W_n}\bigg)\bigg | \mathcal F_n \bigg] 
&= \E\bigg[ f\bigg( \sum_{y \in \Z^d} \mu_{n,\beta}(S_n=y) W_m \circ \theta_{n,y}\bigg)\bigg| \mathcal F_n\bigg]
\end{aligned}
\end{equation}
By Jensen's inequality, 
$$
f\bigg( \sum_{y \in \Z^d} \mu_{n,\beta}(S_n=y) W_m \circ \theta_{n,y}\bigg) \leq  \sum_{y \in \Z^d} \mu_{n,\beta}(S_n=y) f\big(W_m \circ \theta_{n,y}\big)
$$
Therefore,
$$
\begin{aligned}
\E\bigg [ f \bigg(\frac {W_{n+m}}{W_n}\bigg)\bigg| \mathcal F_n \bigg] 
\leq \sum_{y \in \Z^d} \mu_{n,\beta}(S_n=y) \E\big[f\big(W_m \circ \theta_{n,y}\big)\big|\mathcal F_n\big] 
&=\E[f(W_m)].
\end{aligned}
$$
\end{proof}

 \begin{proof}[Proof of Lemma \ref{lemma:running-max}]
 	Let $\tau:=\inf\{n: W_n >t\}$, so that $W_\tau>t$ on the event $\{\tau < \infty\}$. Then 
$$
\begin{aligned}
\P[W_n > t \eps] &\geq \P\bigg[ \tau \leq n, \frac{W_n}{W_\tau} >\eps\bigg] \\
&= \sum_{k=1}^n \E\big[ \1_{\tau=k} \E\bigg(\1_{\frac{W_n}{W_k} >\eps} \bigg| \mathcal F_k\bigg)\bigg] \\
&\geq \sum_{k=1}^n \E\bigg[ \1_{\tau=k} \E\bigg[ f_{\eps}\big(\frac{W_n}{W_k}\big)\bigg| \mathcal F_k\bigg]\bigg], 
\end{aligned}
$$
where $f_{\eps}(x)= \big(\frac x \eps -1)\wedge 1$, which is a concave function that satisfies 
$\1_{[\eps,\infty)}(x) \geq f_{\eps}(x) \geq \1_{[2\eps,\infty)}(x) -  \1_{[0,\eps]}(x)$ whenever $x\geq 0$. 
 Now, using Lemma \ref{lemma:convex-up-bound}, we obtain 
\begin{equation}\label{eq2}
\P[W_n > t \eps] \geq \sum_{k=1}^n \E\bigg[ \1_{\tau=k} \E\big[ f_{\eps}(W_{n-k})\big] \geq \P[\tau \leq n] \inf_{k\in \N} \E[f_{\eps}(W_k)].
\end{equation} 
Moreover,
$$
\inf_{k\in \N} \E[f_{\eps}(W_k)] \geq  \E[ \inf_{k\in \N}   f_{\eps}(W_k)] \geq \P[ \inf_{k\in \N} W_k \geq 2 \eps]- \P[ \inf_{k\in \N} W_k \leq \eps ] \stackrel{\eps\to 0}\to \P[M_\infty >0] -  \P[M_\infty=0]=1,
$$
where  invoked the lower bound on $f_{\eps}(\cdot)$ and the fact that $\{W_\infty>0\}=\{\inf_{k\in \N} W_k>0\}$ is an event of probability 1. It follows that for some $\eps>0$, $\eta:= \inf_{k\in \N}\E[ f_{\eps}(W_k)]>0$. For such $\eps>0$, we conclude that for all $t>1$ and $n \in \N$,\begin{equation*}
	\P[M_n>t] =\P(\tau\leq n)\leq \frac 1 \eta\P[W_n > t\eps].
\end{equation*} Thus, for any $n\in \N$, 
$$
\E[M_n] \leq \frac 1 {\eps \eta} \E[W_n] +1 \leq \frac 1 {\eps \eta} +1. 
$$
Passing to $n\to\infty$, the left hand side converges to $\E[M_\infty]$. Thus, this quantity is also finite, as required.
 \end{proof}

\noindent{\bf Acknowledgement:} The research of the authors is funded by the Deutsche Forschungsgemeinschaft (DFG) under Germany's 
Excellence Strategy EXC 2044-390685587, Mathematics M\"unster: Dynamics-Geometry-Structure.

\end{document}